\def\Z{\mathbb{Z}}
\def\Q{\mathbb{Q}}
\def\R{\mathbb{R}}
\def\H{\mathcal{H}}
\def\C{\mathbb{C}}
\DeclareMathOperator{\im}{Im}
\DeclareMathOperator{\re}{Re}
\def\SL{{\rm SL}}
\def\GL{{\rm GL}}
\newcommand{\pfrac}[2]{\left(\frac{#1}{#2}\right)}
\newcommand{\pMatrix}[4]{\left(\begin{matrix}#1 & #2 \\ #3 & #4\end{matrix}\right)}
\renewcommand{\pmatrix}[4]{\left(\begin{smallmatrix}#1 & #2 \\ #3 & #4\end{smallmatrix}\right)}
\renewcommand{\bar}[1]{\overline{#1}}
\renewcommand{\tilde}{\widetilde}
\renewcommand{\a}{\mathfrak{a}}
\renewcommand{\b}{\mathfrak{b}}
\renewcommand{\c}{\mathfrak{c}}
\newtheorem{theorem}{Theorem}[section]
\newtheorem{lemma}[theorem]{Lemma}
\newtheorem{proposition}[theorem]{Proposition}
\theoremstyle{remark}
\numberwithin{equation}{section}
\newcommand{\sums}{\sideset{}{^*}\sum}
\newcommand{\M}{\mathcal M}
\newcommand{\scN}{\mathcal N}
\newcommand{\K}{\mathcal K}
\newcommand{\scR}{\mathcal R}
\newcommand{\scT}{\mathcal T}
\renewcommand{\S}{\mathcal S}
\renewcommand{\Re}{\operatorname{Re}}
\newcommand{\inv}{^{-1}}
\mathchardef\pFcomma=\mathcode`,
\title[Rankin-Selberg level reciprocity]{Level Reciprocity in the twisted second moment of Rankin-Selberg $L$-functions}
\author{Nickolas Andersen}
\address{UCLA Mathematics Department, Los Angeles, CA 90095}
\email{nandersen@math.ucla.edu}
\author{Eren Mehmet Kiral}
\address{Wako-Shi, Saitama, Japan}
\email{erenmehmetkiral@protonmail.com}
\thanks{This material is based upon work supported by the National Science Foundation under Grant No.~1440140, while the authors were in residence at the Mathematical Sciences Research Institute in Berkeley, California, during the Spring semester of 2017.
The first author is also supported by NSF grant DMS-1701638.}
\date{\today}
\begin{document}

\begin{abstract}
We prove an exact formula for the second moment of Rankin-Selberg $L$-functions $L(\frac 12,f\times g)$ twisted by $\lambda_f(p)$, where $g$ is a fixed holomorphic cusp form and $f$ is summed over automorphic forms of a given level $q$.
The formula is a reciprocity relation that exchanges the twist parameter $p$ and the level $q$. The method involves the Bruggeman/Kuznetsov trace formula on both ends; finally the reciprocity relation is established by an identity of sums of Kloosterman sums.
\end{abstract}

\maketitle

\section{Introduction}

Let $p,q$ be distinct primes. 
Fix a holomorphic cusp form $g$ of weight $\kappa$ on $\SL_2(\Z)$. 
In this paper we establish a reciprocity relation between the twisted second moment of the central values of Rankin-Selberg $L$-functions:
\[
	\sum_{f \text{ level } q} \omega_f \lambda_f(p) L(\tfrac{1}{2},f\times g)^2 \leadsto \sum_{f \text{ level } p} \omega_f \lambda_f(q) L(\tfrac{1}{2}, f\times g)^2.
\]
Here $\omega_f = \pm 1$ is the eigenvalue of $f$ under the Fricke involution and $\lambda_f(p)$ is the $p$\textsuperscript{th} Hecke eigenvalue of $f$. The sum on each side should be understood as a complete sum/integral over the full spectrum of level $q$ or $p$ modular forms, including the holomorphic, discrete, and continuous spectra. The exact formulation is given below in Theorem \ref{thm:MainTheorem}. 

Our work is motivated by the case when $g$ is an Eisenstein series and the sum is over Hecke cusp forms $f$ of a given weight. In that case preliminary calculations with the Petersson trace formula and transforms on sums of Kloosterman sums lead to a formula of rough shape
\begin{equation}\label{eq:twisted-fourth-moment}
	\sum_{f \text{ level } q} \lambda_f(p) L(\tfrac12,f)^4 \leadsto \sum_{f \text{ level } p} \lambda_f(q) L(\tfrac12,f)^4.
\end{equation}
Note that when $f$ is a holomorphic modular form, $\omega_f = -1$ implies $L(\frac 12,f) = 0$, and therefore the identity \eqref{eq:twisted-fourth-moment} does not include $\omega_f$.
One may use the amplification method in conjunction with such an identity to obtain a subconvexity result for $L(\frac12, f)$ in the level aspect. 

We were led to consider such a reciprocity relation after the works of  Conrey~\cite{conrey2007mean}, Young~\cite{YoungReciprocity}, and Bettin~\cite{BettinReciprocity},
who discovered and elaborated upon an identity relating $\M(a,q)$ to $\M(-q,a)$, where $\M(a,q)$ is the  second moment of Dirichlet $L$-functions $L(\frac 12,\chi)$ modulo $q$, twisted by $\chi(a)$. 

Our method is structurally similar to Motohashi's proof of a beautiful formula discovered by Kuznetsov and then fully proven in \cite{MotohashiFourthMomentFE}. 
Similar to Motohashi we apply the Bruggeman/Kuznetsov trace formula, followed by the
 $\GL_2$-Voronoi fomula twice, giving us again a sum over Kloosterman sums. 
We then apply the Bruggeman/Kuznetsov trace formula again in order to obtain the reciprocal moment. 

Our work is distinct in at least three ways from that of Motohashi. 
First, we are working in the congruence subgroup $\Gamma_0(q)$ and twisting by the Fourier coefficient $\lambda_f(p)$.
This allows us to see the reciprocity relation exchanging the level and the twist parameters. 
Second, we twist our moments further by $\omega_f$, so that when we apply the Bruggeman/Kuznetsov trace formula we are working with the cusp-pair $0\infty$. 
The Kloosterman sums associated to the cusp-pair $0\infty$ feature $p$ and $q$ in more symmetric roles, and it becomes conceptually clear how the reciprocity occurs (see Theorem~\ref{thm:S-reciprocity} below). 
As can be seen in \cite{KiralYoungFifthMoment} and \cite{BlomerKhanReciprocity} the trick of moving to the $0\infty$ cusp-pair may be avoided in the fourth moment case by inserting an arithmetic reciprocity relation between the Voronoi formulas, but this trick does not work in the Rankin-Selberg second moment case (more on \cite{BlomerKhanReciprocity} below).
Third, our formula relates the twisted Rankin-Selberg second moment rather than the fourth moment. 
In principle, we could obtain the fourth moment if $g$ were chosen as an Eisenstein series. 
Practically, this corresponds to replacing every instance of $\lambda_g(m)$ in this paper by $\tau_w(m) = \sum_{ab = n}\left(\tfrac{a}{b}\right)^w$, which introduces {main terms} at various points.

Motohashi has produced other beautiful formulas relating different moments of $L$-functions. For example, in \cite{MotohashiFourthMomentZetaThirdMomentMaass} he gives an exact identity relating the weighted fourth moment of the Riemann zeta function on the critical line to third moments of central values of Maass forms of level $1$. 
Later with Ivic \cite{IvicMotohashiFourthMomentZetaError} they use this exact formula to give an asymptotic for the fourth moment of the Riemann zeta function with an error term of size $O(T^{2/3}(\log T)^c)$. 
Several authors have discovered and applied identites between moments of $L$-functions either exact or approximate; see \cite{PetrowTwistedMotohashi,YoungFourthMoment} and the references therein.

Recently Blomer, Li and Miller \cite{BLMSpectralReciprocity} announced an identity involving the first moment of $L(\frac 12,\Pi \times u_j)$ where $\Pi$ is a self-dual cusp form on $\GL_4$ and $u_j$ runs over $\GL_2$ Maass forms.
Notice that Motohashi's formula on the fourth moment could be interpreted as the case where $\Pi$ is a $4 = 1 + 1 + 1 + 1$ isobaric sum. 

During the preparation of this manuscript, Blomer and Khan posted their preprint \cite{BlomerKhanReciprocity}, in which they addressed the twisted fourth moment problem \eqref{eq:twisted-fourth-moment} and realized independently that one obtains a kind of reciprocity relation exchanging $p$ and $q$. 
We assume $p$ and $q$ to be prime for simplicity; {additionally, we include the Fricke eigenvalue in the moment, which allows us make use of arithmetic features coming from the $0\infty$ cusp-pair Kloosterman sums in the Kuznetsov formulas.} 
As an application, Blomer and Khan sum over the twist variable and reconstruct the subconvexity-implying fifth moment bound in \cite{KiralYoungFifthMoment}.
 They start by considering the moment $\sum_{\pi \text{ level } q}  L(1/2,F \times \pi) L(1/2,\pi) \lambda_\pi(\ell)$ where $F$ is a $\GL_3$ automorphic form. This is the $4 = 3 + 1$ decomposition {in the framework of \cite{BLMSpectralReciprocity}}.
 When $F$ is an Eisenstein series one obtains the twisted fourth moment.  

In the framework above, our result corresponds to the $4 = 2 + 2$ setup.
This difference is the reason why we use the $\GL_2$ instead of the $\GL_3$ Voronoi summation formula.

\section{Statement of Results} \label{sec:preliminaries}

We begin by fixing notation, which we mostly borrow from \cite{IwaniecSpectralBook}. 
Let $\Gamma = \Gamma_0(N)$ for some squarefree integer $N$ and let $k\geq 0$ be an even integer.
The weight $k$ Petersson inner product is defined as 
\[
  \langle h_1,h_2\rangle = \iint_{\Gamma\backslash \H} h_1(z) \overline{h_2(z)} y^k \, \frac{dx dy}{y^2}. 
\]
Here $z=x+iy$ and $h_1,h_2$ are holomorphic cusp forms of weight $k$ or Maass cusp forms (in the latter casse $k=0$).

Let $S_k(N)$ denote the space of holomorphic cusp forms of weight $k$ on $\Gamma_0(N)$, and let $\mathcal B_k(N)$ denote an orthonormal basis of $S_k(N)$.
We will always use $f$ or $g$ to denote an element of $\mathcal B_k(N)$.
The Fourier expansion of such an $f$ at a cusp $\a$ of $\Gamma$ is given by
\[
  j(\sigma_\a,z)^{-k} f(\sigma_\a z) = \sum_{n=1}^\infty \rho_{\a f}(n) e(nz),
\]
where $j(\pmatrix abcd, z) = cz+d$ and $\sigma_\a$ is a scaling matrix (see Section~\ref{sec:kloo-b/k}).
When $\a=\infty$ we will often drop the dependence on $\a$ from the notation.
We adopt the standard notation $e(x):= e^{2\pi ix}$.
We normalize the coefficients $\rho_{\a f}(n)$ by setting
\[
  \nu_{\a f}(n) = \left(\frac{\pi^{-k}\Gamma(k) }{(4n)^{k-1}}\right)^{\frac12} \rho_{\a f}(n).
\]
Without loss of generality we may assume that the elements of $\mathcal B_k(N)$ are eigenforms of the Hecke operators $T_n$ for $(n,N)=1$ with eigenvalues $\lambda_f(n)$, and that they satisfy
\begin{equation} \label{eq:fricke}
  N^{-\frac k2} z^{-k} f(-1/Nz) = \omega_f f(z)
\end{equation}
where $\omega_f=\pm 1$ is the eigenvalue of the Fricke involution.
Since the Fricke involution swaps the cusps $\infty$ and $0$, we have the relation
\begin{equation}
  \rho_{0 f}(n) = \omega_f \, \rho_{\infty f}(n).
\end{equation}

Similarly, let $\mathcal U(N) = \{u_j\}$ denote a complete orthonormal system of Maass cusp forms with Fourier expansions
\begin{equation}
  u_j(\sigma_\a z) = \sqrt y \sum_{n \neq 0} \rho_{\a j}(n) K_{i t_j}(2\pi |n| y) e(nx),
\end{equation}
where  $\frac 14+it_j$ is the Laplace eigenvalue and $K_\nu(x)$ is the $K$-Bessel function.
We normalize the coefficients $\rho_{\a j}(n)$ by
\begin{equation}
  \nu_{\a j}(n) = \left(\frac{\pi}{\cosh(\pi t_j)}\right)^{\frac12} \rho_{\a j}(n).
\end{equation}
We may also assume, as above, that the $u_j$ are eigenforms for the Hecke operators $T_n$ for $(n,N)=1$ and for the Fricke involution (i.e. that $u_j$ satisfies \eqref{eq:fricke} with $k=0$).
We write their eigenvalues $\lambda_j(n)$ and $\omega_j$, respectively.

Let $(\nu,\lambda)$ denote one of the pairs $(\nu_{f},\lambda_f)$ or $(\nu_{j},\lambda_j)$.
As long as $(n,N) = 1$ we have the relation
\begin{equation}\label{eq:hecke}
  \nu(m)\lambda(n) = \sum_{d | (m,n) } \nu\left(\frac{mn}{d^2}\right)
\end{equation}
see \cite[(8.37)]{IwaniecSpectralBook}. 
This implies implies that
\[
  \nu(n) = \nu(1)\lambda(n) 
\]
as long as $(n,N) = 1$. 
Both sides are zero when $(\nu,\lambda)$ does not correspond to a newform.

For each cusp $\c$ of $\Gamma$ (see Section~\ref{sec:kloo-b/k}) and for $\re(u)>1$ let
\[
  E_\c(z,u) := \sum_{\gamma \in \Gamma_\infty\backslash \Gamma} \im(\sigma_\c\inv \gamma z)^u
\]
denote the Eisenstein series associated to $\c$.
This has Fourier expansion
\begin{equation}\label{eq:EisensteinFourierExpansion}
  E_\c(\sigma_\a z,u) 
  = \delta_{\a\c} y^{u} + \rho_{\a\c}(0,u) y^{1 - u} 
    + \sqrt y \sum_{n \neq 0} \rho_{\a\c}(n,u) K_{u-\frac 12}(2\pi |n|y) e(nx)
\end{equation}
which has a meromorphic continuation to $u\in \C$.
On the line $\re(u) = \frac 12$ we normalize the coefficients by
\begin{equation}
  \nu_{\a\c}(n,t) = \left(\frac{\pi}{\cosh \pi t}\right)^{\frac12} \rho_{\a\c}  (n,\tfrac 12+it).
\end{equation}

For the remainder of the paper, fix a
 holomorphic newform $g\in S_\kappa(1)$.
For $s=\sigma+it$ with $\sigma$ sufficiently large,
let us call
\begin{equation}\label{eq:RawLFunction}
  \tilde L(s, h \times g) = \zeta_N(2s) \sum_{n=1}^\infty \frac{\lambda_g(n)\nu_h(n)}{n^s},
\end{equation}
where $\zeta_N(s) = \prod_{p \nmid N} (1 - p^{-s})\inv$.
This is the ``raw $L$-function'' involving {$\nu_h$ as opposed to $\lambda_h$}; it has an analytic continuation and a functional equation.
We choose this notation {in order to simultaneously cover oldforms.} 
The coefficients $\nu_h(n)$ come up in applications of the Bruggeman/Kuznetsov trace formula and hence the Dirichlet series $\tilde L(s, g \times h)$ naturally appears.
If $h$ is a newform (holomorphic or Maass), then {we} simply {have} $\tilde L(s,h) = \nu_h(1) L(s,h)$, where $L(s,h)$ is the usual $L$-function of $h$.

Let $\varphi$ be a smooth function defined on the nonnegative reals such that 
\begin{equation}\label{eq:phiKuznetsovdGrowth}
  \varphi(0) = 0 \quad \text{ and } \quad \varphi^{(j)}(x) \ll (1+x)^{-2-\epsilon} \quad \text{ for }j=0,1,2,
\end{equation}
and let $\varphi_h$ and $\varphi_+$ denote the integral transforms in \eqref{eq:KuznetsovTransformDefinitions} below.
Define
\begin{equation}\label{eq:Ngdisc}
  \scN_g^{d}(p,q;s;\varphi) = \sum_{u_j \in \mathcal U(q)} \omega_j \, \varphi_+(t_j) \tilde L(s, g \times u_j)^2\lambda_j(p),
\end{equation}
\begin{equation}\label{eq:Nghol}
  \scN_g^{h}(p,q;s;\varphi) = \sum_{\substack{\ell \text{ even}}} i^{\ell} \varphi_h(\ell) \sum_{f \in \mathcal B_\ell(q)}  \omega_f \, \tilde L(s,g \times f)^2 \lambda_f(p),
\end{equation}
and
\begin{equation}\label{eq:Ngcts}
  \scN_g^{c}(p,q;s;\varphi) = \frac{1}{4\pi}\sum_{\c} \int_{-\infty}^\infty \varphi_+(t) \tilde L\left(s, g\times E_\c(\sigma_0*, \tfrac12 + it)\right) \tilde L\left(s, g\times E_\c(*,\tfrac12 - it)\right) \tau_{it}(p) \, dt,
\end{equation}
where $\sum_\c$ is over a set of inequivalent cusps of $\Gamma$.
Call
\begin{equation}\label{eq:NgDefinition}
  \scN_g(p,q;s;\varphi) = \scN_g^d(p,q;s;\varphi) + \scN_g^c(p,q;s;\varphi) + \scN_g^h(p,q;s;\varphi).
\end{equation}
With $\varphi$ as above, the function $\scN_g(p,q;s;\varphi)$ is holomorphic at $s=\frac 12$, which is the point we are most interested in, and we let $\scN_g(p,q;\varphi) := \scN_g(p,q;\frac 12;\varphi)$.

We are now ready to state our main theorem.
\begin{theorem}\label{thm:MainTheorem}
Let $p$ and $q$ be distinct primes and let $\phi$ be a function on $[0,\infty)$ satisfying the conditions in the beginning of Theorem \ref{thm:S-reciprocity}.
With the notation above, define
\begin{equation}
	\M_g(p,q;\varphi) := (1-p^{-2}) \, \scN_g(p,q;\varphi) - \frac{2\lambda_g(p)}{\sqrt p}(1-p^{-1}) \, \scN_g(1,q;\varphi) + \frac{1}{\sqrt p} \, \scN_g(1,pq;\varphi).
\end{equation}
Then
\begin{equation}
	\sqrt q \, \M_g(p,q;\phi) = \sqrt p \, \M_g(q,p;\Phi),
\end{equation}
where $\Phi$ is an integral transform of $\phi$ given in \eqref{eq:Phi-def}.
\end{theorem}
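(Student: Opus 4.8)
The plan is to carry out the Motohashi-type strategy announced in the introduction: open the square of the Rankin--Selberg $L$-function, apply the Bruggeman/Kuznetsov trace formula on $\Gamma_0(q)$ for the $0\infty$ cusp-pair to pass from the spectral sum to a sum of Kloosterman sums, dualize by the $\GL_2$-Voronoi summation formula, invoke the reciprocity of Kloosterman sums in Theorem~\ref{thm:S-reciprocity}, and then reverse the dictionary at level $p$. Concretely, I would first establish a ``spectral $=$ geometric'' proposition asserting that
\[
  \sqrt q \, \M_g(p,q;\varphi) = \S(p,q;\varphi),
\]
where $\S(p,q;\varphi)$ is an explicit weighted sum of $0\infty$-Kloosterman sums $S_{0\infty}(m,n;c)$ (the object whose reciprocity is the content of Theorem~\ref{thm:S-reciprocity}); the theorem then follows by applying that reciprocity, which exchanges $p\leftrightarrow q$ and sends $\varphi \mapsto \Phi$ via \eqref{eq:Phi-def}, and reading the right-hand side backwards as $\sqrt p\,\M_g(q,p;\Phi)$.

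To build the dictionary, for $\re s$ large I would open $\tilde L(s,g\times h)^2$ from \eqref{eq:RawLFunction} as a double Dirichlet series in $m_1,m_2$ with coefficients $\lambda_g(m_1)\lambda_g(m_2)\,\nu_h(m_1)\nu_h(m_2)$, collapse $\nu_h(m_1)\nu_h(m_2)$ using \eqref{eq:hecke} to reach $\nu_h(1)^2$ times Hecke eigenvalues $\lambda_h(\cdot)$, and incorporate the twist $\lambda_h(p)$ (resp. $\tau_{it}(p)$ on the continuous spectrum) by one further application of \eqref{eq:hecke}. Because $p$ is prime, this last step produces exactly the Hecke shifts $\lambda_h(mp)$, $\lambda_h(m/p)$ that the coefficients $(1-p^{-2})$, $\tfrac{2\lambda_g(p)}{\sqrt p}(1-p^{-1})$, $\tfrac1{\sqrt p}$ in the definition of $\M_g$ are engineered to linearize: the combination of $\scN_g(p,q)$, $\scN_g(1,q)$ and the level-$pq$ term $\scN_g(1,pq)$ completes the Euler factor at $p$, so that after summing over the spectrum the arithmetic weight depends on $p$ only through a clean local factor and $p$ sits in the modulus symmetrically with $q$. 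The spectral sums that remain are of the form $\sum_h \omega_h\,\varphi_+(t_h)\nu_h(1)^2\lambda_h(n)$ (and the holomorphic and Eisenstein analogues with $\varphi_h$, $\varphi_+$ as in \eqref{eq:KuznetsovTransformDefinitions}); these are precisely the inputs to the $0\infty$ Bruggeman/Kuznetsov formula, the normalization by $\nu_h(1)^2$ and the weight $\omega_h$ (via \eqref{eq:fricke}) being exactly what moves us onto the $0\infty$ cusp-pair. This yields $\S(p,q;\varphi)$ together with a diagonal term from the Kronecker delta in Kuznetsov, which must be tracked since the final identity is exact.

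It remains to dualize: applying $\GL_2$-Voronoi once in each of the variables $m_1,m_2$ carrying the $\lambda_g$-coefficients transforms the $m$-sums and rearranges the moduli of the Kloosterman sums, after which the resulting configuration is exactly the one in Theorem~\ref{thm:S-reciprocity}. Invoking that theorem swaps the roles of $p$ and $q$ and replaces the Bessel weight built from $\varphi$ by the one built from $\Phi$; reversing the two Voronoi steps and the Kuznetsov step at level $p$ then reassembles $\sqrt p\,\M_g(q,p;\Phi)$, and matching the diagonal contributions on the two sides (which are symmetric in $p,q$, consistent with the symmetric term $\scN_g(1,pq)$ appearing in both $\M_g(p,q;\varphi)$ and $\M_g(q,p;\Phi)$) completes the proof.

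The main obstacle is the arithmetic and transform bookkeeping rather than any single hard estimate. On the arithmetic side one must verify that the specific coefficients defining $\M_g$ are the unique linear combination that completes the $p$-local factor and renders the geometric side symmetric enough to match Theorem~\ref{thm:S-reciprocity}; on the analytic side one must follow the test function through two Kuznetsov transforms and two Voronoi (Hankel-type) transforms to confirm it lands on the $\Phi$ of \eqref{eq:Phi-def}, justify the analytic continuation in $s$ to $s=\tfrac12$ and the interchange of the (conditionally convergent) spectral and $c$-sums under the decay hypotheses \eqref{eq:phiKuznetsovdGrowth}, and treat the continuous spectrum uniformly with the discrete and holomorphic parts. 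These are exactly the places where the $0\infty$ cusp-pair is used to keep $p$ and $q$ in symmetric positions, so the reciprocity becomes transparent once the dictionary is in place.
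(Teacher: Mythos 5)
Your proposal follows essentially the same route as the paper: express $\M_g(p,q;\varphi)$ in terms of the Kloosterman-sum quantity $\S(p,q;s,\phi)$ via the Hecke relations, the $0\infty$ Bruggeman/Kuznetsov formula and a sieving to moduli coprime to $pq$ (the paper's \eqref{eq:Ng-S}), then invoke the reciprocity of Theorem~\ref{thm:S-reciprocity} (whose proof packages the double Voronoi step as the functional equation of $D_g$) and read the result back at level $p$. One small correction: the $0\infty$ cusp-pair Kuznetsov formula of Proposition~\ref{prop:0inftyKuznetsov} has no Kronecker-delta diagonal term --- this is precisely one of the payoffs of twisting by $\omega_f$ --- so the diagonal matching you anticipate does not arise.
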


The various steps of the proof are detailed in the remaining sections, but we give a high-level outline here.

\begin{proof}[Proof of Theorem~\ref{thm:MainTheorem}]
Using equation \eqref{eq:Ng-S} { with $\mathcal{S}$ defined as in \eqref{eq:S-def}} below we have that
\[
	\mathcal{S}(p,q;s;\phi) = \frac{1- p^{-4s}}{\zeta_{pq}(2s)^2} \scN_g(p,q;s;\phi) - \frac{2\lambda_g(p)}{p^s}\frac{\left(1 - p^{-2s}\right)}{\zeta_{pq}(2s)^2}\scN_g(1,q;s;\phi) + \frac{\scN_g(1,pq;s;\phi)}{\sqrt{p}\zeta_{pq}(2s)^2}.
\]
Specialization of $\zeta_{pq}(2s)^2 \S(p,q;s;\phi)$ at $s = \tfrac12$ yields $\mathcal{M}_g(p,q;\phi)$. Now we apply the reciprocity relation for $\S$ in Theorem \ref{thm:S-reciprocity} below, which gives
\[
	\zeta_{pq}(2s)^2\S(p,q;s,\phi) = \zeta_{pq}(2s)^2 \left(\frac pq\right)^{2s - 1} \frac{\sqrt{p}}{\sqrt{q}} \S(q,p;s;\Phi)
\]
with $\Phi$ as in \eqref{eq:Phi-def}. Specializing to $s = \tfrac12$ yields the result.
\end{proof}

\section{The Bruggeman/Kuznetsov trace formula} \label{sec:kloo-b/k}
 
The purpose of this section is to state the Bruggeman/Kuznetsov trace formula associated to the $0\infty$ cusp pair, which will allow us to relate the moments in Theorem~\ref{thm:MainTheorem} to sums of Kloosterman sums.

Let $\Gamma=\Gamma_0(N)$ and let $\a,\b \in \Q \cup \{\infty\}$ denote two cusps. 
A scaling matrix $\sigma_\a \in \SL_2(\R)$ for the cusp $\a$ satisfies the properties
\begin{equation} \label{eq:ScalingMatrixProperties}
  \sigma_\a \infty = \a, \quad \text{ and } \quad \sigma_\a\inv \Gamma_\a \sigma_\a = \left\{ \pm \pmatrix 1n01 : n \in \Z\right\}
\end{equation}
where $\Gamma_\a$ is the stabilizer of $\a$ in $\Gamma$. 
We are primarily interested in the cases $\a=\infty$ and $\a=0$, for which we have
\begin{equation}
  \sigma_\infty = \pMatrix 1001 \quad \text{ and } \quad \sigma_0 = \pMatrix 0{-1/\sqrt{N}}{\sqrt N}0.
\end{equation}
 
Given a pair of cusps $\a, \b$ and associated scaling matrices $\sigma_\a, \sigma_\b$ 
a set of allowed moduli for the Kloosterman sums is given by
 	\[
 		\mathcal{C}_{\a,\b} = \{\gamma >0: \left(\begin{smallmatrix} *&*\\\gamma&*\end{smallmatrix}\right) \in \sigma_\a\inv \Gamma\sigma_\b\}.
 	\]
The Kloosterman sum for a modulus $\gamma \in \mathcal{C}_{\a,\b}$ is defined as
\begin{equation} \label{eq:Kloo-cusps}
 		S^\Gamma_{\a\b}(m,n;\gamma) = \sum_{\pmatrix ab\gamma d \in \Gamma_\infty \backslash \sigma_\a\inv \Gamma\sigma_\b/\Gamma_\infty} e\left(\frac{am + dn}{\gamma}\right).
\end{equation}
When $\a=\b=\infty$ we obtain the usual Kloosterman sum (for $c\equiv 0\pmod N$)
\begin{equation}
	S^\Gamma_{\infty\infty}(m,n;c) = S(m,n;c) := \sum_{\substack{d\bmod c \\ (d,c)=1}} e\pfrac{\bar dm+d n}{c}.
\end{equation}
For these sums we have Weil's bound
\begin{equation}\label{eq:WeilBound}
	|S(m, n;c)| \leq \tau_0(c) (m,n,c)^{\frac 12} c^{\frac12}
\end{equation}	
where $\tau_0(c)$ is the number of divisors of $c$.
For the $\infty0$ cusp-pair, and for the choices above, the set of allowed moduli is $\mathcal C_{\infty,0}=\{\gamma = c\sqrt{N} : (c,N)=1\}$. 
Then \eqref{eq:Kloo-cusps} can be expressed in terms of $S(m,n;c)$ via
 	\[
 		S_{\infty0}^{\Gamma}(m,n;c\sqrt{N}) = S(\bar N m, n;c).
 	\]

Kloosterman sums appear in the Fourier expansion of the Eisenstein series.
For $n\neq 0$ and $\re (u)>1$ we have (see \cite[Theorem~3.4]{IwaniecSpectralBook})
\begin{equation}
	\rho_{\a\c}(n,u) = \frac{2\pi^u}{\Gamma(u)} |n|^{u-\frac 12} \sum_{c\in \mathcal C_{\c,\a}} \frac{S_{\c\a}(0,n;c)}{c^{2u}}.
\end{equation}
The Kloosterman sum $S(0,n;c)$ is equal to the Ramanujan sum $\sum_{d\mid (n,c)} \mu(c/d) d$ and is multiplicative as a function of $c$.
It follows that, for $q$ prime,
\begin{equation}
	\rho_{\infty 0}(n,u) = \frac{2\pi^u\tau^{(q)}_{u-\frac 12}(n)}{\Gamma(u)\zeta_q(2u)} \quad \text{ and } \quad \rho_{\infty\infty}(n,u) = \frac{2\pi^u}{\Gamma(u)} \left( \frac{\tau_{u-\frac 12}(n)}{\zeta(2u)} - \frac{\tau^{(q)}_{u-\frac 12}(n)}{\zeta_q(2u)} \right),
\end{equation}
where $\tau_w(n) = \sum_{ab=|n|}(a/b)^w$ and $\tau_w^{(q)}(n) = \tau_w(n/(n,q))$ (cf. \cite[(2.27)]{IwaniecSpectralBook}).
From this we can derive a Hecke relation for the pair $(\nu_{\a\c}(n,t),\tau_{it}(p))$, where $p \neq q$ is a prime.
{Suppose that $(n,q)=1$ and that $(\a,\c)=(\infty,\infty)$ or $(\infty,0)$. Then since}
\begin{equation}
	\tau_w^{(q)}(m) \tau_w^{(q)}(n) = \sum_{d\mid (m,n)} \tau_w^{(q)} \pfrac{mn}{d^2}
\end{equation}
{we have} the relation
\begin{equation}
	\nu_{\a\c}(n,t) \tau_{it}(p) = \nu_{\a\c}(np, t) + \delta_{p\mid n} \nu_{\a\c}(n/p,t).
\end{equation}

The Bruggeman/Kuznetsov trace formula relates sums of Kloosterman sums to coefficients of cusp forms and Eisenstein series.
Let $\varphi$ be a smooth function defined on the nonnegative reals satisfying
\begin{equation}\label{eq:phiKuznetsovdGrowth}
	\varphi(0) = 0 \quad \text{ and } \quad \varphi^{(j)}(x) \ll (1+x)^{-2-\epsilon} \quad \text{ for }j=0,1,2,
\end{equation}
and define
\begin{align} \label{eq:KuznetsovTransformDefinitions}
	\varphi_h(\ell) &= \int_0^\infty J_{\ell-1} (x) \varphi(x) \, \frac{dx}{x}, \notag \\
	\varphi_+(t) &= \frac{i}{2\sinh\pi t}\int_{0}^{\infty} \left( J_{2it}(x) - J_{-2it}(x) \right) \varphi(x) \, \frac{dx}{x}.
\end{align}
The following can be found in \cite[Theorem~9.8]{IwaniecSpectralBook}.

\begin{proposition}[Bruggeman/Kuznetsov] \label{prop:0inftyKuznetsov}
Suppose that $m,n\geq 1$.
Let $\varphi$ be a smooth function defined on $[0,\infty)$ which satisfies \eqref{eq:phiKuznetsovdGrowth}. 
Define
\[
	\K(m,n,N;\varphi) = \sum_{(c,N) = 1} \frac{S(\overline{N}m,n;c)}{c\sqrt{N}} \varphi\left(\frac{4\pi \sqrt{mn}}{c\sqrt{N}}\right).
\]
Then 
\[
	\K  = \K^d + \K^c + \K^h,
\]
where
\begin{align}
	\K^d(m,n,N;\varphi) &= \sum_{u_j \in \mathcal U(N)} \varphi_+(t_j) \overline{\nu_{0j}}(m) \nu_{\infty j}(n), \\
	\K^c (m,n,N;\varphi) &= \sums_{\mathfrak c} \frac{1}{4\pi} \int_{-\infty}^\infty \varphi_+(t) \overline{\nu_{0\mathfrak c}}(m,t) \nu_{\infty \mathfrak c}(n,t) \, dt, \\
	\K^h(m,n,N;\varphi) &= \sum_{\ell \equiv 0(2)} i^\ell \varphi_h(\ell) \sum_{f\in \mathcal B_\ell(N)} \overline{\nu_{0f}}(m) \nu_{\infty f}(n).
\end{align}
\end{proposition}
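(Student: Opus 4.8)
The statement is the specialization to the cusp pair $(\infty,0)$ of the general Bruggeman/Kuznetsov formula for an arbitrary pair of cusps, \cite[Theorem~9.8]{IwaniecSpectralBook}, so the plan is to recall the Poincar\'e-series derivation of that formula and then check that the cusp-pair data match the objects appearing here. With the scaling matrices $\sigma_\infty$ and $\sigma_0$ fixed via \eqref{eq:ScalingMatrixProperties}, one has the modulus set $\mathcal C_{\infty,0}=\{c\sqrt N:(c,N)=1\}$ and the Kloosterman-sum identity $S^\Gamma_{\infty0}(m,n;c\sqrt N)=S(\bar N m,n;c)$ recorded above; substituting $\gamma=c\sqrt N$ turns the abstract Kloosterman side of Iwaniec's formula into the sum defining $\K$, with weight $\varphi(4\pi\sqrt{mn}/(c\sqrt N))$. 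It therefore remains to produce the spectral side with the normalized coefficients $\nu_{0j},\nu_{\infty j}$ at the two cusps and the transforms $\varphi_+,\varphi_h$ of \eqref{eq:KuznetsovTransformDefinitions}.

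First I would form the Poincar\'e series $P_{0m}$ attached to the cusp $0$ with frequency $m$, built from a fixed radial weight times $e(m\,\re(\sigma_0\inv\gamma z))$ and summed over $\gamma\in\Gamma_{0}\backslash\Gamma$ (the stabilizer of the cusp $0$), and compute its $n$th Fourier coefficient at the cusp $\infty$ in two ways. Expanding $P_{0m}$ through the Bruhat decomposition of $\sigma_\infty\inv\Gamma\sigma_0$ and integrating $x$ over a period collapses the $\gamma$-sum into the double-coset Kloosterman sum $S^\Gamma_{\infty0}(m,n;c\sqrt N)=S(\bar N m,n;c)$, leaving an Archimedean Bessel integral in the radial weight; after $\gamma=c\sqrt N$ this is exactly the sum defining $\K$. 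On the other hand, spectrally decomposing $P_{0m}$ along the orthonormal system $\{u_j\}\cup\{E_\c\}$ and reading off the same coefficient at $\infty$ gives $\sum_j\langle P_{0m},u_j\rangle\,\nu_{\infty j}(n)$ plus the analogous Eisenstein integral; since $\langle P_{0m},u_j\rangle$ unfolds to $\overline{\nu_{0j}(m)}$ times a transform of the weight, this reproduces the Maass term $\K^d$ and the continuous term $\K^c$. The holomorphic term $\K^h$ appears because the same-sign condition $mn>0$ in the relevant $J$-Bessel kernel couples also to the holomorphic discrete series; equivalently one includes the weight-$\ell$ holomorphic Poincar\'e series, whose contribution carries the $J_{\ell-1}$ transform $\varphi_h$.

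The analytic heart of the argument is the identification of the two integral transforms in \eqref{eq:KuznetsovTransformDefinitions}. One must evaluate the Bessel integrals produced by the radial weight and invert the relation between that weight and the test function $\varphi$, so that the Kloosterman-side weight is precisely $\varphi(4\pi\sqrt{mn}/(c\sqrt N))$ while the Maass and holomorphic spectral weights come out exactly as $\varphi_+(t_j)$ and $\varphi_h(\ell)$. The $\tfrac{i}{2\sinh\pi t}(J_{2it}-J_{-2it})$ shape of $\varphi_+$ emerges from the Hankel-type Bessel transform attached to the $K_{it}$-Whittaker function governing the radial part of a Maass form of spectral parameter $t$, while the $J_{\ell-1}$ in $\varphi_h$ reflects the Fourier--Whittaker coefficient of a weight-$\ell$ holomorphic form; the decay and vanishing hypotheses \eqref{eq:phiKuznetsovdGrowth} guarantee absolute convergence of both sides and legitimize interchanging the sum over $c$ with the spectral expansion.

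The main obstacle I anticipate is the continuous spectrum. One must justify the term-by-term spectral expansion of the Poincar\'e series including its Eisenstein component, which requires the meromorphic continuation of $E_\c(z,u)$ from \eqref{eq:EisensteinFourierExpansion} together with a contour shift onto the critical line $\re(u)=\tfrac12$, and one must establish estimates uniform in the spectral parameters $t_j$, the weight $\ell$, and the modulus $c$ that are strong enough to license every interchange of summation and integration. These convergence and continuation issues, rather than any single computation, are what make the clean identity $\K=\K^d+\K^c+\K^h$ delicate; once they are in place, matching the transforms is a direct Bessel-function calculation.
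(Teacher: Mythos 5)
The paper gives no proof of this proposition at all—it is quoted verbatim from \cite[Theorem~9.8]{IwaniecSpectralBook}—and your sketch is a correct outline of the standard Poincar\'e-series/Bessel-inversion derivation underlying that reference, with the cusp-pair bookkeeping ($\mathcal C_{\infty,0}=\{c\sqrt N:(c,N)=1\}$ and $S^\Gamma_{\infty0}(m,n;c\sqrt N)=S(\bar Nm,n;c)$) matching exactly what the paper sets up in Section~\ref{sec:kloo-b/k}. Since the paper defers entirely to the cited result, there is nothing to compare beyond noting that your outline is the standard argument behind that citation.
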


\section{Calculations with the Hecke relations and Kloosterman sums}

Let $\phi$ be as in Theorem~\ref{thm:MainTheorem}.
The purpose of this section is to prove the identity
\begin{equation}\label{eq:Ng-S}
	\scN_g(p,q;s;\phi) = \frac{2\lambda_g(p)}{p^s(1 + p^{-2s})}
	\scN_g(1,q;s;\phi)  - \frac{1}{\sqrt{p}} \frac{\scN_g(1,pq;s;\phi)}{\left(1 - p^{-4s}\right)}   + \zeta_{pq}(2s)^2 \frac{ \S(p,q;s,\phi)}{\left(1 - p^{-4s}\right) },
\end{equation}
where $\S$ is defined in \eqref{eq:S-def} below. 
The following section shows that $\S$ is the quantity that satisfies a natural reciprocity relation.

We begin with explicit computations in the case of the holomorphic spectrum $\scN_g^h(p,q;s;\phi)$.
Suppose that $\sigma>1$.
Opening the $L$-functions {as Dirichlet series} we find that
\begin{equation} \label{eq:N-g-h-1}
	\scN_g^h(p,q;s;\phi) = \zeta_q^2(2s) \sum_{m\geq 1} \frac{\lambda_g(m)}{m^s} \sum_{\ell \equiv 0(2)} i^\ell \phi_h(\ell) \sum_{f\in \H_\ell(q)} \omega_f \nu_f(m) \sum_{n\geq 1} \frac{\lambda_g(n)}{n^s} \nu_f(n) \lambda_f(p).
\end{equation}
We apply the Hecke relations \eqref{eq:hecke} in order to absorb the $\lambda_f(p)$ factor.

\begin{lemma}\label{lem:Lsfg-moment-in-Dirichlet-2}
Let $p$ be a prime number and let $g\in S_\kappa(1)$. 
Suppose that $(\nu,\lambda)$ is a pair of arithmetic functions satisfying the relation
\begin{equation} \label{eq:nu-hecke}
  \nu(n)\lambda(p) = \nu(np) + \delta_{p\mid n} \nu(n/p)
\end{equation}
and the bound $\nu(n) \ll n^{\alpha}$ for some $\alpha>0$.
Then for $\sigma>1+\alpha$ we have
\begin{equation} \label{eq:Mg-S1}
  \sum_{n\geq 1} \frac{\lambda_g(n)}{n^s} \nu(n)\lambda(p)
  = \frac{\lambda_g(p)}{p^s} \sum_{n\geq 1} \frac{\lambda_g(n)}{n^s} \nu(n) + \left(1-p^{-2s}\right) \sum_{n\geq 1} \frac{\lambda_g(n)}{n^s} \nu(np).
\end{equation}
\end{lemma}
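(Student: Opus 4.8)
The plan is to expand the left-hand side of \eqref{eq:Mg-S1} using the two Hecke relations at hand---the hypothesis \eqref{eq:nu-hecke} for the pair $(\nu,\lambda)$, and the level-one multiplicativity of $\lambda_g$ supplied by \eqref{eq:hecke}---and then to reindex the resulting Dirichlet series according to whether $p$ divides the summation variable. Throughout, the bound $\nu(n)\ll n^\alpha$ together with Deligne's bound $\lambda_g(n)\ll n^\epsilon$ makes every series below absolutely convergent for $\sigma>1+\alpha$, which justifies all of the rearrangements of terms.

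First I would pull the scalar $\lambda(p)$ inside the sum and apply \eqref{eq:nu-hecke}, obtaining
\[
  \sum_{n\geq 1}\frac{\lambda_g(n)}{n^s}\nu(n)\lambda(p)
  = \sum_{n\geq 1}\frac{\lambda_g(n)}{n^s}\nu(np)
  + \sum_{\substack{n\geq 1\\ p\mid n}}\frac{\lambda_g(n)}{n^s}\nu(n/p).
\]
The first term on the right is already of the shape appearing in \eqref{eq:Mg-S1}, so the remaining work lies entirely in the second sum. Writing $n=pm$ there and factoring out $p^{-s}$ turns it into $p^{-s}\sum_{m\geq 1}\lambda_g(pm)m^{-s}\nu(m)$, after which I would invoke \eqref{eq:hecke} at the prime $p$ in the form $\lambda_g(pm)=\lambda_g(p)\lambda_g(m)-\delta_{p\mid m}\lambda_g(m/p)$. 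This splits the expression into the main term $\frac{\lambda_g(p)}{p^s}\sum_{m\geq 1}\frac{\lambda_g(m)}{m^s}\nu(m)$, which is exactly the first term of \eqref{eq:Mg-S1}, plus a correction $-p^{-s}\sum_{p\mid m}\frac{\lambda_g(m/p)}{m^s}\nu(m)$.

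A second reindexing $m=p\ell$ in the correction term converts it into $-p^{-2s}\sum_{\ell\geq 1}\frac{\lambda_g(\ell)}{\ell^s}\nu(p\ell)$, which combines with the leftover first term to give $(1-p^{-2s})\sum_{n\geq 1}\frac{\lambda_g(n)}{n^s}\nu(np)$, and the identity follows. There is no genuine obstacle here: the argument is purely formal once absolute convergence is secured. The only point demanding care is keeping the two Hecke relations straight---the hypothesis \eqref{eq:nu-hecke} \emph{raises} the argument of $\nu$ by a factor of $p$, whereas \eqref{eq:hecke} is used to \emph{lower} the argument of $\lambda_g$---and correctly tracking the sign and the power of $p$ introduced by each of the two index shifts.
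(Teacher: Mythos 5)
Your proof is correct and follows essentially the same route as the paper's: apply the hypothesis \eqref{eq:nu-hecke} to absorb $\lambda(p)$, then use the Hecke relation for $\lambda_g$ at the prime $p$ to split off the main term, and recombine the two $\nu(np)$ sums into the factor $(1-p^{-2s})$. The only difference is cosmetic---you make the reindexings $n=pm$ and $m=p\ell$ explicit where the paper leaves them implicit.
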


\begin{proof}
Using the relation \eqref{eq:nu-hecke}
we find that
\begin{equation}
	\sum_{n\geq 1} \frac{\lambda_g(n)}{n^s} \nu(n)\lambda(p) = \sum_{n\geq 1} \frac{\lambda_g(n)}{n^s} \nu(np) + \sum_{n\geq 1} \frac{\lambda_g(np)}{(np)^s} \nu(n).
\end{equation}
Since $\lambda_g$ also satisfies the Hecke relations \eqref{eq:nu-hecke}
we see that
\begin{equation}
	\sum_{n\geq 1} \frac{\lambda_g(np)}{(np)^s} \nu(n) = \frac{\lambda_g(p)}{p^s} \sum_{n\geq 1} \frac{\lambda_g(n)}{n^s} \nu(n) - \frac{1}{p^{2s}} \sum_{n\geq 1} \frac{\lambda_g(n)}{n^s} \nu(np).
\end{equation}
The lemma follows.
\end{proof}

Applying Lemma~\ref{lem:Lsfg-moment-in-Dirichlet-2} to \eqref{eq:N-g-h-1}, we find that
\begin{equation} \label{eq:N-g-h-2}
	\scN_g^h(p,q;s;\phi) = \frac{\lambda_g(p)}{p^s} \scN_g^h(1,q;s;\phi) + (1-p^{-2s}) \zeta_q(2s)^2 \sum_{m,n\geq 1} \frac{\lambda_g(m)\lambda_g(n)}{(mn)^s} \K^h(m,pn,q;\phi).
\end{equation}
A similar computation confirms that \eqref{eq:N-g-h-2} holds for the discrete and continuous spectra, simply replacing $h$ by $d$ or $c$ above.
After applying Proposition~\ref{prop:0inftyKuznetsov}, it follows that
\begin{equation} \label{N-g-1}
	\scN_g(p,q;s;\phi) = \frac{\lambda_g(p)}{p^s} \scN_g(1,q;s;\phi) + (1-p^{-2s}) \zeta_q(2s)^2 \sum_{m,n\geq 1} \frac{\lambda_g(m)\lambda_g(n)}{(mn)^s} \K(m,pn,q;\phi).
\end{equation}

Our next aim is to relate the sum on the right-hand side of \eqref{N-g-1}
to the sum
\begin{equation} \label{eq:S-def}
  \S(p,q;s,\phi) := \sum_{m,n\geq 1} \frac{\lambda_g(m)\lambda_g(n)}{(mn)^s} \sum_{(c,pq)=1} \frac{S(m\overline{q},np;c)}{c\sqrt{q}} \phi\left(\frac{4\pi \sqrt{mnp}}{c\sqrt{q}}\right).
\end{equation}
This involves a sieving process which leaves us with a sum over $c$ relatively prime to $pq$.

\begin{proposition}\label{prop:Mg-S}
Let $p$ be a prime number and let $g\in S_\kappa(1)$. For $\sigma>\frac 54$ we have
\begin{multline} \label{eq:Mg-S}
  (1+p^{-2s}) \sum_{m,n\geq 1} \frac{\lambda_g(m)\lambda_g(n)}{(mn)^s} \K(m,pn,q;\phi)
  \\ = \S(p,q;s,\phi) - \frac{1}{\sqrt p} \zeta_{pq}(2s)^{-2} \scN_g(1,pq;s;\phi) + \frac{\lambda_g(p)}{p^s} \zeta_q(2s)^{-2} \scN_g(1,q;s;\phi).
\end{multline}
\end{proposition}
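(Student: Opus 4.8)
My plan is to start from the definition of $\K(m,pn,q;\varphi)$ and relate its Kloosterman-sum modulus condition $(c,q)=1$ to the more restrictive condition $(c,pq)=1$ appearing in $\S(p,q;s,\phi)$. The key arithmetic input is the structure of $\K(m,pn,q;\phi)$, whose defining sum runs over $S(\bar q\, m, pn; c)$ for $(c,q)=1$. I would split this sum according to whether $p\mid c$ or $p\nmid c$. The terms with $p\nmid c$ are exactly those appearing in $\S(p,q;s,\phi)$ after matching the argument of $\phi$ and accounting for the $\sqrt{p}$ in $\sqrt{mnp}/(c\sqrt q)$ versus $\sqrt{m\cdot pn}/(c\sqrt q)$. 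The terms with $p\mid c$ must be re-expressed; here I would write $c=p^k c'$ with $(c',p)=1$ and use multiplicativity of the Kloosterman sum together with the congruence $pn$ in the second argument to simplify $S(\bar q\, m, pn; p^k c')$, isolating the contribution that reassembles into a $\K$-sum of modulus divisible by $pq$.

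\smallskip
\noindent
The next step is to recognize that the $p\mid c$ contribution, after the multiplicative splitting and a change of variables $c\mapsto pc$, produces sums of the shape $\K(m,n,pq;\phi)$ — that is, Kloosterman sums for level $pq$ rather than $q$. Running the computation of \eqref{N-g-1} in reverse (i.e.\ applying Proposition~\ref{prop:0inftyKuznetsov} and the Hecke-relation bookkeeping of Lemma~\ref{lem:Lsfg-moment-in-Dirichlet-2} backwards, now with level $pq$ and twist parameter $1$) converts this back into the spectral quantity $\scN_g(1,pq;s;\phi)$, which explains the middle term on the right-hand side with its $1/\sqrt p$ and $\zeta_{pq}(2s)^{-2}$ factors. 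The factor $(1+p^{-2s})$ on the left and the $\lambda_g(p)/p^s$ term carrying $\scN_g(1,q;s;\phi)$ arise from the Euler factor at $p$ when passing from $\zeta_q(2s)$ to $\zeta_{pq}(2s)$, combined with the Hecke relation $\tau_{it}(p)$-type manipulation already used to extract $\lambda_f(p)$; I would track these local factors carefully at the prime $p$.

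\smallskip
\noindent
The main obstacle I anticipate is the bookkeeping in the $p\mid c$ case: one must correctly evaluate $S(\bar q\, m, pn; p^k c')$ using the twisted multiplicativity $S(a,b;rs)=S(a\bar s,b\bar s;r)\,S(a\bar r, b\bar r;s)$ for $(r,s)=1$, and then identify precisely which powers of $p$ survive. Because the second entry $pn$ shares the factor $p$ with part of the modulus, the local Kloosterman sum at $p$ is not a generic Weil sum but a Ramanujan-type or degenerate sum, and getting its exact value (including the cases $k=1$ versus $k\geq 2$) is where errors are likely to creep in. The convergence constraint $\sigma>\tfrac54$ ensures all the Dirichlet series and Kloosterman-sum sums converge absolutely (via the Weil bound \eqref{eq:WeilBound} and $\nu(n)\ll n^\alpha$), so the rearrangements and interchanges of summation are justified; I would verify absolute convergence once at the outset and then manipulate freely.
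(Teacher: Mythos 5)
Your overall strategy coincides with the paper's: split the $c$-sum in $\K(m,pn,q;\phi)$ according to whether $p\mid c$, identify the $p\nmid c$ part with $\S(p,q;s,\phi)$ (these terms are in fact literally identical, no rescaling needed), and reassemble the $p\mid c$ part. But the proposal defers exactly the step where the proposition's content lies, and the one place where you do commit to a mechanism, you get it wrong. The factor $(1+p^{-2s})$ on the left and the term $\frac{\lambda_g(p)}{p^s}\zeta_q(2s)^{-2}\scN_g(1,q;s;\phi)$ do \emph{not} come from ``the Euler factor at $p$ when passing from $\zeta_q(2s)$ to $\zeta_{pq}(2s)$.'' In the paper's argument one first evaluates the degenerate local sums: for $p^2\mid c$ one has $S(m,pn;c)=0$ unless $p\mid m$, with $S(pm,pn;p^2c)=pS(m,n;pc)$, and for $p\,\|\,c$ one has $S(m,pn;c)=S(\bar pm,n;c/p)$ times $-1$ or $p-1$ according as $p\nmid m$ or $p\mid m$. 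The ``$+p$'' part of the $p-1$ coefficient combines with the $p^2\mid c$ contribution to produce a sum over \emph{all} $(c,q)=1$ carrying the coefficient $\lambda_g(pm)/(pm)^s$; applying the Hecke relation $\lambda_g(pm)=\lambda_g(p)\lambda_g(m)-\delta_{p\mid m}\lambda_g(m/p)$ then yields the $\frac{\lambda_g(p)}{p^s}\zeta_q(2s)^{-2}\scN_g(1,q;s;\phi)$ term \emph{plus} a copy of the original sum $\scR$ with coefficient $-p^{-2s}$ (after swapping the roles of $m$ and $n$ via $S(\bar qpm,n;c)=S(\bar qn,pm;c)$). It is this self-referential term, moved to the left-hand side, that produces $(1+p^{-2s})\scR$. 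Meanwhile the ``$-1$'' part of $p-1$ combines with the $p\nmid m$, $p\,\|\,c$ terms to give, after $c\mapsto pc$, exactly $-\frac{1}{\sqrt p}\sum_{m,n}\frac{\lambda_g(m)\lambda_g(n)}{(mn)^s}\K(m,n,pq;\phi)=-\frac{1}{\sqrt p}\zeta_{pq}(2s)^{-2}\scN_g(1,pq;s;\phi)$, which is the only place your ``run Kuznetsov at level $pq$'' step is needed (and since the twist there is $1$, no reverse Hecke bookkeeping is required).

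So the gap is twofold: (i) the exact values of the local Kloosterman sums at $p$ — which you correctly flag as the danger zone — are never pinned down, and without the precise $-1$ versus $p-1$ dichotomy the two halves do not recombine into the three stated terms; and (ii) the claimed origin of the $(1+p^{-2s})$ and $\lambda_g(p)p^{-s}$ factors is a misdiagnosis, which suggests the recombination step has not actually been carried out. Your framing via twisted multiplicativity $S(a,b;rs)=S(a\bar s,b\bar s;r)S(a\bar r,b\bar r;s)$ would certainly reproduce the needed identities, but as written the proposal stops short of the computation that makes the proposition true.
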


In the proof of Proposition~\ref{prop:Mg-S} we will need the following facts about Kloosterman sums, all of which follow from standard exponential sum manipulations.
\begin{lemma} \label{lem:kloo-props}
Suppose that $m,n,c \in \Z$ with $c>0$ and that $p$ is prime.
\begin{enumerate}
  \item If $p\mid\mid c$ then
  \begin{equation} \label{eq:kloo-p-1}
    S(m,pn;c) = S(\bar p m, n; c/p) \times
    \begin{cases}
      -1 & \text{ if } p\nmid m, \\
      p-1 & \text{ if }p\mid m,
    \end{cases}
  \end{equation}
  where $\bar pp\equiv 1\pmod{c/p}$.
  \item If $p^2 \mid c$ then 
  \begin{equation} \label{eq:kloo-p-2}
    S(m,pn;c)=0 \quad \text{ unless } p\mid m.
  \end{equation}
  \item We have
  \begin{equation} \label{eq:kloo-p-3}
    S(pm,pn,p^2c) = p S(m,n,pc).
  \end{equation}
\end{enumerate}
\end{lemma}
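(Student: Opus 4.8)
The three identities are local statements at the prime $p$, so the plan is to isolate the $p$-part of each Kloosterman sum and evaluate it directly; all three then reduce to orthogonality of additive characters and the multiplicative structure of $(\Z/c\Z)^\times$. The one nontrivial tool is the twisted multiplicativity of Kloosterman sums: for coprime moduli $c_1,c_2$,
\[
  S(m,n;c_1c_2) = S(\bar{c_2}\,m,\bar{c_2}\,n;c_1)\,S(\bar{c_1}\,m,\bar{c_1}\,n;c_2),
\]
with $\bar{c_2}c_2\equiv 1\pmod{c_1}$ and $\bar{c_1}c_1\equiv 1\pmod{c_2}$, which follows from the Chinese Remainder Theorem applied to the $d$-sum after writing $1/(c_1c_2)\equiv \bar{c_2}/c_1+\bar{c_1}/c_2\pmod 1$. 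For \eqref{eq:kloo-p-1}, factor $c=p\cdot(c/p)$ with $(p,c/p)=1$ and apply this with $c_1=p$, $c_2=c/p$. The modulus-$p$ factor is $S(\bar{(c/p)}\,m,\ \bar{(c/p)}\,pn;p)$; since its second entry is $\equiv 0\pmod p$ it collapses to the Ramanujan sum $S(a,0;p)=\sum_{(d,p)=1}e(da/p)$ with $a=\bar{(c/p)}\,m$, equal to $-1$ for $p\nmid a$ and $p-1$ for $p\mid a$, the dichotomy being governed by $p\mid m$ because $\bar{(c/p)}$ is a unit. The other factor is $S(\bar p\,m,\ \bar p\,pn;c/p)$, whose second entry reduces to $n$ since $\bar p\,p\equiv 1\pmod{c/p}$, leaving $S(\bar p\,m,n;c/p)$; multiplying the two factors gives \eqref{eq:kloo-p-1}.

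For \eqref{eq:kloo-p-2}, again peel off the $p$-part: writing $c=p^k c'$ with $k\ge 2$ and $(p,c')=1$, it suffices to prove $S(a,pb;p^k)=0$ whenever $p\nmid a$. Here I would use the substitution $d\mapsto d(1+p^{k-1}t)$ for $t$ running over $\Z/p\Z$, which permutes the reduced residues modulo $p^k$ since $1+p^{k-1}t$ is a unit. Because $k\ge 2$ we have $p^k\mid d\,p^{k-1}t\cdot pb$, so the ``$dpb$'' part of the phase is unchanged, while $\bar{(1+p^{k-1}t)}\equiv 1-p^{k-1}t\pmod{p^k}$ turns the ``$\bar d a$'' part into $e(\bar d a/p^k)\,e(-\bar d a\,t/p)$. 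Invariance of the sum under the substitution, averaged over $t\bmod p$, produces an inner factor $\sum_{t\bmod p}e(-\bar d a\,t/p)$ that vanishes term-by-term unless $p\mid a$. This detection step is the only genuinely non-formal point in the lemma.

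For \eqref{eq:kloo-p-3} no multiplicativity is needed. Pulling the common factor $p$ out of the numerator rewrites the phase of $S(pm,pn;p^2c)$ as $e\big((\bar d m+dn)/(pc)\big)$, where $\bar d$ may now be read as the inverse of $d$ modulo $pc$; the summand therefore depends only on $d\bmod pc$. As $d$ runs over reduced residues modulo $p^2c$, each reduced residue modulo $pc$ is hit exactly $p$ times, since every class modulo $pc$ coprime to $pc$ has $p$ lifts modulo $p^2c$, all automatically reduced because $p^2c$ and $pc$ have the same prime divisors; this gives $S(pm,pn;p^2c)=p\,S(m,n;pc)$. The main obstacle throughout is purely bookkeeping — tracking which modulus each inverse ($\bar{(c/p)}$, $\bar p$, and the two inverses in the multiplicativity formula) is taken with respect to, and checking that the map in the second part really is a permutation of units — rather than any substantial analytic difficulty.
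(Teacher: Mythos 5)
Your proof is correct and complete. The paper itself gives no proof of this lemma, asserting only that the three identities ``follow from standard exponential sum manipulations''; your argument --- twisted multiplicativity to isolate the $p$-part, the Ramanujan sum evaluation for (1), the unit substitution $d\mapsto d(1+p^{k-1}t)$ detecting $p\mid m$ for (2), and the fiber-counting for (3) --- is precisely the standard manipulation intended, with the moduli of the various inverses tracked correctly throughout.
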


\begin{proof}[Proof of Proposition~\ref{prop:Mg-S}]
Let  
\begin{align}
	\scR :=& \sum_{m,n\geq 1} \frac{\lambda_g(m)\lambda_g(n)}{(mn)^s} \K(m,pn,q;\phi) \\
	=& \sum_{m,n\geq 1} \frac{\lambda_g(m)\lambda_g(n)}{(mn)^s} \sum_{(c,q)=1} \frac{S(\bar qm,pn;c)}{c\sqrt q} \phi \pfrac{4\pi\sqrt{mnp}}{c\sqrt q},
\end{align}
and write $\scR = \S + \scT$,
where $\S=\S(p,q;s,\phi)$ is defined in \eqref{eq:S-def} and $\scT$ comprises those terms of $\scR$ with $p\mid c$.
If furthermore $p^2 \mid c$, we apply \eqref{eq:kloo-p-2}, and if $p\mid\mid c$, we apply \eqref{eq:kloo-p-1}, obtaining
\begin{multline}
	\scT = 
	\sum_{\substack{m,n\geq 1}} \frac{\lambda_g(pm)\lambda_g(n)}{(pm)^s n^s} \sum_{\substack{(c,q)=1 \\ p\mid c}} \frac{S(\bar qpm,pn;pc)}{pc\sqrt q} \phi \pfrac{4\pi \sqrt{mn}}{c\sqrt{q}} \\
	- 	\sum_{\substack{m,n\geq 1\\p\nmid m}} \frac{\lambda_g(m)\lambda_g(n)}{m^s n^s} \sum_{\substack{(c,pq)=1}} \frac{S(\bar {qp}m,n;c)}{pc\sqrt q} \phi \pfrac{4\pi \sqrt{mn}}{c\sqrt{pq}} \\
	+(p-1) \sum_{\substack{m,n\geq 1\\p\mid m}} \frac{\lambda_g(m)\lambda_g(n)}{m^s n^s} \sum_{\substack{(c,pq)=1}} \frac{S(\bar {qp}m,n;c)}{pc\sqrt q} \phi \pfrac{4\pi \sqrt{mn}}{c\sqrt{pq}}. 
\end{multline}
Now we apply \eqref{eq:kloo-p-3} to the terms of the first sum.
In the last line we separate $p-1$, combining the $p$ term with the first line, and the $-1$ term with the second. We find that
\begin{multline}
	\scT = \sum_{\substack{m,n\geq 1}} \frac{\lambda_g(pm)\lambda_g(n)}{(pm)^s n^s} \sum_{\substack{(c,q)=1}} \frac{S(\bar qm,n;c)}{c\sqrt q} \phi \pfrac{4\pi\sqrt{mn}}{c\sqrt{q}} \\
	- \frac{1}{\sqrt p}	\sum_{\substack{m,n\geq 1}} \frac{\lambda_g(m)\lambda_g(n)}{m^s n^s} \sum_{\substack{(c,pq)=1}} \frac{S(\bar {qp}m,n;c)}{c\sqrt {pq}} \phi \pfrac{4\pi\sqrt{mn}}{c\sqrt{pq}}.
\end{multline}
Finally, we apply the Hecke relation again to $\lambda_g(pm)$ in the first term, and conclude that
\begin{equation}
	\scT = \frac{\lambda_g(p)}{p^s} \zeta_q(2s)^{-2} \scN_g(1,q;s;\phi) - \frac{1}{p^{2s}} \scR - \frac{1}{\sqrt p} \zeta_{pq}(2s)^{-2} \scN_g(1,pq;s;\phi)
\end{equation}
since
\begin{equation}
	S(\bar q pm,n;c) = S(pm,\bar q n;c) = S(\bar q n, pm;c).
\end{equation}
On the other hand, $\scT = \scR - \S$, so we have
\begin{equation}
	(1+p^{-2s})\scR = \S - \frac{1}{\sqrt p} \zeta_{pq}(2s)^{-2} \scN_g(1,pq;s;\phi) + \frac{\lambda_g(p)}{p^s} \zeta_q(2s)^{-2} \scN_g(1,q;s;\phi),
\end{equation}
as desired.
\end{proof}

Now let us combine the last two results. Combining \eqref{N-g-1} and Proposition \ref{prop:Mg-S} we obtain
\begin{multline}
	\scN_g(p,q;s;\phi) = \frac{\lambda_g(p)}{p^s}\scN(1,q,s,\phi) + \frac{\zeta_{pq}(2s)^2 \left(1 +   p^{-2s}\right)}{\left(1 - p^{-2s}\right)\left(1 + p^{-2s}\right)} \sum_{m,n\geq 1} \frac{\lambda_g(m)\lambda_g(n) }{(mn)^s} K(m,pn;s;\phi)\\
	= \left(\frac{\lambda_g(p)}{p^s} + \frac{\lambda_g(p)}{p^s}\frac{\left(1 - p^{-2s}\right) }{\left(1 + p^{-2s}\right)}\right) \scN_g(1,q;s;\phi)  - \frac{1}{\sqrt{p}} \frac{\scN_g(1,pq;s;\phi)}{\left(1 - p^{-4s}\right)}   + \zeta_{pq}(2s)^2 \frac{ S(p,q;s,\phi)}{\left(1 - p^{-4s}\right)} .
\end{multline}
Rearanging the terms in the first parentheses we obtain \eqref{eq:Ng-S}.

\section{Reciprocity for \texorpdfstring{$\S(p,q;s,\phi)$}{S(p,q;s,phi)}}

In this section we will prove the following reciprocity relation for $\S(p,q;s,\phi)$.
\begin{theorem} \label{thm:S-reciprocity}
Let $\phi$ be a smooth test function satisfying $\phi^{(j)}(0)  = 0$ and $\phi^{(j)}(x) \ll (1 + x)^{-A}$ for $0\leq j\leq 12$ and for some $A>12$. 
Suppose that $\Re(s) = \sigma>\frac 54$ and that $p,q$ are distinct primes. 
Then
\begin{equation}
	\sqrt q \, \S(p,q;s,\phi) = \pfrac{p}{q}^{2s-1} \sqrt p \, \S(q,p;s,\Phi),
\end{equation}
where, for any $\xi$ satisfying $0<\xi+12<A$, $\Phi(x)$ is defined as
\begin{equation} \label{eq:Phi-def}
	\Phi(x) = \Phi_{\kappa,s}(x) :=  \frac{1}{2\pi i}  \int_{(\xi)} \tilde\phi(u) 2^{-u} \left[\frac{\Gamma(\frac{\kappa+1}{2}-s-\frac{u}{2})}{\Gamma(\frac{\kappa-1}{2}+s+\frac{u}{2})}\right]^2 \left(\frac{x}{2}\right)^{u+4s-2} \, du
\end{equation}
and $\tilde\phi$ is the Mellin transform
\begin{equation}
  \tilde\phi(u) = \int_0^\infty t^u \phi(t) \, \mfrac{dt}{t}.
\end{equation}
Furthermore, the function $\Phi_{\kappa, s}$ can be analytically continued to $\frac12 \leq \sigma \leq \frac54 + \epsilon$ and it satisfies \eqref{eq:phiKuznetsovdGrowth} in that region.
\end{theorem}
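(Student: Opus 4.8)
The plan is to open the Kloosterman sums, separate the two summation variables by a Mellin transform, and then invert the additive characters by applying the $\GL_2$ Voronoi summation formula for $g$ to each of the two resulting additively twisted Dirichlet series. Writing $S(m\bar q, np; c) = \sum_{d \bmod c}^{*} e\bigl((\bar d\bar q\, m + dp\, n)/c\bigr)$ factors the summand into a piece depending on $m$ through $e(\bar d\bar q\, m/c)$ and a piece depending on $n$ through $e(dp\, n/c)$; the only remaining coupling is through the test function $\phi(4\pi\sqrt{mnp}/(c\sqrt q))$. To decouple it I would insert the Mellin representation $\phi(x) = \frac{1}{2\pi i}\int_{(\xi)}\tilde\phi(u)\,x^{-u}\,du$, which contributes a symmetric factor $(mn)^{-u/2}$. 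Since $\phi^{(j)}(x)\ll(1+x)^{-A}$ for $0\le j\le 12$ forces $\tilde\phi(u)\ll|u|^{-12}$ on $\Re u = \xi$, and since $\sigma>\tfrac54$ keeps every series absolutely convergent, the interchange of the $m,n,c,d$ sums with the $u$-integral is justified, and each inner sum takes the pure shape $\sum_{m\ge1}\lambda_g(m)\,m^{-w}\,e(am/c)$ with $w = s+\tfrac u2$.

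I would then apply to each inner sum the functional equation of the additively twisted $L$-function of $g$, equivalently $\GL_2$ Voronoi summation. For a holomorphic newform of weight $\kappa$ and level one this relation inverts the additive character, $e(a/c)\mapsto e(-\bar a/c)$, introduces a factor $c^{1-2w}(2\pi)^{2w-1}$ together with the Gamma quotient $\Gamma(\tfrac{\kappa+1}2-w)/\Gamma(\tfrac{\kappa-1}2+w)$, and sends the exponent $w$ to $1-w$ in the dual series. Because both inner sums carry the \emph{same} exponent $w=s+\tfrac u2$ (the Mellin factor $(mn)^{-u/2}$ and the original $(mn)^{-s}$ being symmetric in $m$ and $n$), the two applications produce the \emph{square} of this Gamma quotient, matching exactly the bracketed factor in \eqref{eq:Phi-def}. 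The two character inversions act as $e(\bar d\bar q\,m/c)\mapsto e(-dq\,m/c)$ and $e(dp\,n/c)\mapsto e(-\bar d\bar p\,n/c)$, so after the substitution $d\mapsto-d$ the $d$-sum reassembles into $S(qm,\bar p n;c)=S(\bar p n, qm;c)$. Relabelling $m\leftrightarrow n$ turns this into $S(m\bar p, nq;c)$, precisely the Kloosterman sum of $\S(q,p;s,\Phi)$: this character inversion is the mechanism by which $p$ and $q$ exchange roles.

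It remains to recombine the $u$-integral into the definition of $\Phi$ and to collect the arithmetic prefactors. The substitution $u\mapsto u' = 2-4s-u$ aligns the dual exponent $1-w = 1-s-\tfrac u2$ with the exponent $s+\tfrac{u'}2$ of a Mellin-expanded $\Phi$; under it $\tilde\phi(u)$, the squared Gamma quotient, and the accumulated powers of $2$ reassemble into $\tilde\Phi(u')$, while the monomial $(x/2)^{u+4s-2} = (x/2)^{-u'}$ realizes Mellin inversion and reproduces \eqref{eq:Phi-def}. Tracking the powers of $c$ — namely $c^{-1}$ from the measure, $c^{u}$ from the Mellin factor, and $c^{2(1-2w)}$ from the two functional equations — collapses to $c^{-1+u'}$, reproducing the $c^{-1}$ of the reciprocal measure $1/(c\sqrt p)$ with the companion $c^{u'}$ furnished by the Mellin expansion of $\Phi$; the monomial $(\sqrt p/\sqrt q)^{-u}$ yields the constant $(p/q)^{2s-1}$ after the change of variables, the remaining $1/\sqrt q$ and $1/\sqrt p$ pairing off against the prefactors $\sqrt q$ and $\sqrt p$ in the statement. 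Finally, to continue $\Phi_{\kappa,s}$ to $\tfrac12\le\sigma\le\tfrac54+\epsilon$ and verify the decay \eqref{eq:phiKuznetsovdGrowth}, I would shift the contour $(\xi)$ and apply Stirling to the Gamma quotient, reading off $\Phi(0)=0$ and $\Phi^{(j)}(x)\ll(1+x)^{-2-\epsilon}$ for $j=0,1,2$ directly from the Mellin--Barnes representation.

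The step I expect to be the main obstacle is the rigorous justification of the Voronoi step at the level of bare powers. In the region $\sigma>\tfrac54$ the dual series $\sum_m\lambda_g(m)\,m^{-(1-w)}$ has abscissa $\Re(1-w) = 1-\sigma-\tfrac\xi2 < 0$, so it diverges and the term-by-term inversion cannot be read literally. I would resolve this either by performing Voronoi on the genuine test-function weight, so that the dual sum carries a rapidly decaying Bessel kernel and converges outright, or by interpreting each dual series through the analytic continuation of the entire twisted $L$-function $\sum_n\lambda_g(n)e(an/c)n^{-w}$ and only then shifting the $u$-contour into a region where the reassembled sum $\S(q,p;s,\Phi)$ converges, using the decay of $\Phi$. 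Keeping this convergence bookkeeping consistent while extracting the clean closed form \eqref{eq:Phi-def}, together with the subsequent Mellin--Barnes estimates needed for the continuation and decay of $\Phi$, is where the bulk of the technical care will lie.
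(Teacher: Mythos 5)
Your proposal is correct and follows essentially the same route as the paper: Mellin-decouple the test function, apply the functional equation of the additively twisted $L$-functions $L(s,g,a\bar d/c)$ (which the paper packages as the double Dirichlet series $D_g(a,b,c;s)$ of Proposition~\ref{prop:D-props}), and resolve the convergence obstacle you correctly flag by shifting the $u$-contour to $\xi=-2\sigma-\epsilon$, where the dual series converges absolutely and no poles are crossed since $D_g$ is entire. The only ingredient you leave implicit is the Phragm\'en--Lindel\"of bound \eqref{eq:H-bound} on $D_g$ in the critical strip, which the paper uses to justify the contour shift and the final interchange of sums and integral.
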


Suppose that $\phi^{(j)}(0)=0$ and $\phi^{(j)}(x) \ll (1+x)^{-A}$ for all $j\in \{0,1,2,\ldots, J\}$, {for some integer $J$}. 
{We will later see that we may take $J = 12$.}
By the decay of $\phi$ and its derivatives at $0$ and $\infty$, we may apply integration by parts $j$ times and obtain that
\begin{equation}\label{eq:MellinPhiBound}
	\widetilde{\phi}(u) = (-1)^j \int_{0}^\infty \phi^{(j)}(x) \mfrac{x^{u + j}}{u (u + 1) \cdots (u + j-1)}\, \mfrac{d x}{x} \ll (1 + |u|)^{-j}
\end{equation} 
as long as the integral converges.
Near $0$ we have $\phi^{(j)}(x)\ll x^{J-j}$, so the integrand is majorized by $x^{J+\xi-1}$  {(with $\xi = \Re(u)$)} as $x\to 0$.
For large $x$ the integrand is majorized by $x^{-A+\xi+j-1}$.
It follows that the integral is absolutely convergent (and thus $\eqref{eq:MellinPhiBound}$ holds with $j=J$) as long as $-J<\xi<A-J$.

Starting with \eqref{eq:S-def}, we write $\phi$ as the inverse Mellin transform of $\widetilde{\phi}$ via
\begin{equation}
	\phi(x) = \frac{1}{2\pi i} \int_{(\xi)} \tilde\phi(u) x^{-u} \, du.
\end{equation}
We then interchange integral and summation to obtain
\begin{align}\label{eq:S-integral-series}
	\sqrt q \, \S(p,q;s,\phi) &= \sum_{m,n\geq 1} \frac{\lambda_g(m)\lambda_g(n)}{(mn)^s} \sum_{(c,pq)=1} \frac{S(m\overline{q},np;c)}{c} \frac{1}{2\pi i} 
	\int_{(\xi)} \widetilde{\phi}(u)\left(\frac{4\pi \sqrt{mnp}}{c\sqrt{q}}\right)^{-u} \, du \notag\\
	&= \frac{1}{2\pi i} \int_{(\xi)} \tilde\phi(u) \pfrac{4\pi \sqrt p}{\sqrt q}^{-u} \sum_{(c,pq)=1} \sum_{m,n\geq 1} \frac{\lambda_g(m)\lambda_g(n)S(\bar qm, pn; c)}{c^{1-u}(mn)^{s+\frac u2}} \, du.
\end{align}
As the following proposition shows, the innermost sum above satisfies a functional equation which exchanges the roles of $p$ and $q$.

\begin{proposition} \label{prop:D-props}
Let $g\in S_k(1)$ be a newform.
% \blue{(\emph{we need it to be a hecke form, don't we?, even if not we do write $\lambda_g(n)$, is it worth the trouble? Anyone who wants can take a linear combination of the results at the end if they need it})}
Suppose that $a$, $b$, and $c$ are integers with $c$ positive and $(a,c)=(b,c)=1$. 
For $\sigma>1$, define
\begin{equation} \label{eq:H-def}
	D_g(a,b,c;s) := \sum_{m,n\geq 1} \frac{\lambda_g(m)\lambda_g(n)S(am, bn; c)}{(mn)^s}.
\end{equation}
Then $D_g(a,b,c;s)$ extends to an entire function of $s$ which satisfies the functional equation
\begin{equation} \label{eq:H-func-eq}
	D_g(a,b,c;s) = \pfrac{2\pi}{c}^{4s-2} \left[\frac{\Gamma(\frac{\kappa+1}{2}-s)}{\Gamma(\frac{\kappa-1}{2}+s)}\right]^2 D_g(\bar a, \bar b, c;1-s),
\end{equation}
where $a\bar a \equiv b\bar b \equiv 1 \pmod{c}$.
Furthermore, for any $\epsilon>0$, as $|t|\to\infty$ with $\sigma$ bounded we have
\begin{equation} \label{eq:H-bound}
	D_g(a,b,c;s) \ll 
	\begin{cases}
		c^{\frac 12+\epsilon} & \text{ if } \sigma \geq 1+\epsilon, \\
		c^{\frac 52-2\sigma+\epsilon}|t|^{2-2\sigma+\epsilon} & \text{ if } -\epsilon \leq \sigma \leq 1+\epsilon, \\
		c^{\frac 52-4\sigma+\epsilon} |t|^{2-4\sigma} & \text{ if } \sigma\leq -\epsilon.
	\end{cases}
\end{equation}
\end{proposition}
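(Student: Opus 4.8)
The plan is to open the Kloosterman sum, factor $D_g$ into a finite sum of products of additively twisted $L$-functions, continue and flip each factor by a Voronoi-type functional equation coming from the automorphy of $g$, and finally recombine.

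First I would write $S(am,bn;c)=\sum_{d\bmod c,\,(d,c)=1} e\big((\bar d\,am+d\,bn)/c\big)$ and separate the $m$- and $n$-sums, obtaining
\[
  D_g(a,b,c;s)=\sum_{\substack{d\bmod c\\(d,c)=1}} L_g\big(s;\tfrac{a\bar d}{c}\big)\,L_g\big(s;\tfrac{bd}{c}\big),\qquad L_g(s;\alpha):=\sum_{m\geq1}\frac{\lambda_g(m)\,e(m\alpha)}{m^s}.
\]
The analytic continuation of $D_g$ then reduces to that of a single twist $L_g(s;a'/c)$ with $(a',c)=1$. For this I would study the Mellin transform $\int_0^\infty g(a'/c+iu)\,u^s\,du/u$: since $g$ is a cusp form it decays rapidly as $u\to\infty$ and (via the modular transformation below) as $u\to0^+$, so the integral is entire, and under the normalization $a_g(m)=\lambda_g(m)m^{(\kappa-1)/2}$ it equals $\Gamma(s)(2\pi)^{-s}L_g(s-\tfrac{\kappa-1}{2};a'/c)$. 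Dividing by $\Gamma$ shows $L_g(\,\cdot\,;a'/c)$, and hence the finite sum $D_g$, is entire in $s$.

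Next, the functional equation. Choosing $\left(\begin{smallmatrix}a'&*\\c&d\end{smallmatrix}\right)\in\SL_2(\Z)$ with $a'd\equiv1\pmod c$ gives $g(a'/c+i/(c^2t))=(cit)^\kappa g(-\bar{a'}/c+it)$; substituting this in the Mellin integral and exploiting the symmetry $s\leftrightarrow\kappa-s$ yields
\[
  L_g(s;a'/c)=i^\kappa\Big(\tfrac{2\pi}{c}\Big)^{2s-1}\frac{\Gamma(\frac{\kappa+1}{2}-s)}{\Gamma(\frac{\kappa-1}{2}+s)}\,L_g\big(1-s;-\bar{a'}/c\big).
\]
I would apply this to both factors (with $a'=a\bar d$ and $a'=bd$), use $i^{2\kappa}=1$ since $\kappa$ is even, pull out the factor $(2\pi/c)^{4s-2}$ and the squared gamma ratio, and note the twists become $-\overline{a\bar d}/c\equiv-\bar a d/c$ and $-\overline{bd}/c\equiv-\bar b\bar d/c$. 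Re-indexing the $d$-sum by $d\mapsto\bar d$ converts $\sum_d L_g(1-s;-\bar a d/c)\,L_g(1-s;-\bar b\bar d/c)$ into $D_g(\bar a,\bar b,c;1-s)$, which is precisely \eqref{eq:H-func-eq}.

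For \eqref{eq:H-bound} I would argue region by region. On $\sigma\geq1+\epsilon$ the double series converges absolutely, and Deligne's bound $|\lambda_g(m)|\le\tau_0(m)$ combined with Weil's bound \eqref{eq:WeilBound} — using that $(am,bn,c)=(m,n,c)$ because $(ab,c)=1$, so the divisor sum $\sum_{e\mid c}e^{1/2-2\sigma}\tau_0(e)^2$ converges — gives $D_g\ll c^{1/2+\epsilon}$ uniformly in $t$. On $\sigma\le-\epsilon$ I would invoke \eqref{eq:H-func-eq}: the factor $D_g(\bar a,\bar b,c;1-s)\ll c^{1/2+\epsilon}$ since $\Re(1-s)\ge1+\epsilon$, the conductor factor contributes $c^{2-4\sigma}$, and Stirling gives $[\Gamma(\tfrac{\kappa+1}{2}-s)/\Gamma(\tfrac{\kappa-1}{2}+s)]^2\ll|t|^{2-4\sigma}$, producing $c^{5/2-4\sigma+\epsilon}|t|^{2-4\sigma}$. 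The intermediate strip then follows from the Phragmén–Lindelöf convexity principle applied to the entire function $D_g$, interpolating linearly in $\sigma$ between these two endpoint exponents in both the $c$- and $|t|$-aspects to give $c^{5/2-2\sigma+\epsilon}|t|^{2-2\sigma+\epsilon}$. The step demanding the most care is the uniformity in $c$ of this convexity argument: Phragmén–Lindelöf must be applied to $D_g$ as a single function, since bounding the individual twists $L_g(s;\cdot)$ and summing over the residues $d$ would introduce a spurious factor of $c$ and ruin the exponent; this requires knowing a priori that $D_g$ has finite order in the strip uniformly in $c$, which I would extract from the polynomial growth of the twisted $L$-functions on vertical lines. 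A secondary point to handle carefully is the rapid decay of $g(a'/c+iu)$ as $u\to0^+$, needed for both the continuation and the clean functional equation, which the modular transformation supplies by sending $u\to0$ to the cusp at $i\infty$ where $g$ decays exponentially, overpowering the polynomial automorphy factor.
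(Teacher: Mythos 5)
Your proposal follows essentially the same route as the paper: open the Kloosterman sum to factor $D_g$ as $\sums_{d \bmod c} L(s,g,\cdot)L(s,g,\cdot)$ as in \eqref{eq:Dg-in-L}, flip each additively twisted $L$-function by its Voronoi functional equation \eqref{eq:LgAdditiveTwistFE} and reindex $d$ to recover \eqref{eq:H-func-eq}, then prove \eqref{eq:H-bound} by Weil plus Deligne for $\sigma\ge 1+\epsilon$, the functional equation plus Stirling for $\sigma\le-\epsilon$, and Phragm\'en--Lindel\"of in between. The only difference is that you derive the twisted functional equation from the Mellin transform of $g$ and its modularity, whereas the paper simply quotes it; this, and your remark on uniformity in $c$ of the convexity step, are correct but do not change the argument.
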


\begin{proof}
Suppose that $\sigma>1$.
Opening up the Kloosterman sum, we find that
\begin{equation}\label{eq:Dg-in-L}
	D_g(a,b,c;s) = \sums_{d\bmod c} L(s,g,\tfrac{a\bar d}{c}) L(s,g,\tfrac{b\bar d}{c}),
\end{equation}
where
\begin{equation}\label{eq:LgAdditiveTwist}
	L(s,g,x) := \sum_{m=1}^\infty \frac{\lambda_g(m)e(mx)}{m^s}.
\end{equation}
The $L$-series $L(s,g,x)$ extends to an entire function of $s$ which satisfies, for $x=\frac dc$ with $(d,c)=1$,  the functional equation
\begin{equation}\label{eq:LgAdditiveTwistFE}
	L(s,g,\tfrac dc) = (-1)^{\frac \kappa2} \pfrac{2\pi}{c}^{2s-1} \frac{\Gamma(\frac{\kappa+1}2-s)}{\Gamma(\frac{\kappa-1}2+s)} L(1-s,g,-\tfrac{\bar d}{c}).
\end{equation}
Thus $D_g(a,b,c;s)$ inherits the functional equation \eqref{eq:H-func-eq} after using that $S(-a,-b;c)=S(a,b;c)$.
To prove the estimates \eqref{eq:H-bound} we apply the Phragmen-Lindel\"of principle (see, e.g. IK, Theorem 5.53).
The estimate in the range $\sigma \geq 1+\epsilon$ follows from a standard computation involving the Weil bound \eqref{eq:WeilBound}.
In the range $\sigma \leq -\epsilon$ we apply the functional equation \eqref{eq:H-func-eq}, together with Stirling's formula and the bound in the $\sigma\geq 1+\epsilon$ range.
Then Phragmen-Lindel\"of yields the bound in the middle range $-\epsilon\leq \sigma \leq 1+\epsilon$.
\end{proof}

The innermost sum in \eqref{eq:S-integral-series} equals $c^{u-1}D_g(\overline{q}, p, c;s + \tfrac{u}{2})$.
We would like to apply the functional equation \eqref{eq:H-func-eq} and write the result as a sum of Kloosterman sums. 
To do this, we will need to work in the region of absolute convergence of the function $D_g(q,\bar p,c;1-\frac u2-s)$, so we move the line of integration in $u$ to $\xi=-2\sigma-\epsilon$.
By the estimates \eqref{eq:H-bound} and \eqref{eq:MellinPhiBound}, the resulting integral converges as long as $J\geq 4$. {Note that $\sigma > \tfrac 54$ and hence the sum over $c$ converges.}
The function is entire, as can be seen from \eqref{eq:Dg-in-L} and we do not pick up any poles in the process\footnote{Note that if we were to replace $\lambda_g(n)$ with $\tau_w(n)$, the resulting function $D$ would be a finite sum of products of two Estermann zeta functions, and we would have picked up poles which in turn would constitute the main terms one encounters in the reciprocity formula of the fourth moment.}.

\begin{proof}[Proof of Theorem~\ref{thm:S-reciprocity}]
Starting with the \eqref{eq:S-integral-series} represenation of $\sqrt q \, \S(p,q;s)$ with $\xi$ satisfying $\xi = - 2\sigma-\epsilon$, we apply the functional equation \eqref{eq:H-func-eq} to obtain
\begin{multline}
	\sqrt {q/p} \, \S(p,q;s;\phi)
	= \left(\frac{p}{q}\right)^{2s-1} \frac{1}{2\pi i}  \int_{(\xi)} \widetilde{\phi}(u)  \left[\frac{\Gamma(\frac{\kappa+1}{2}-s-\frac{u}{2})}{\Gamma(\frac{\kappa-1}{2}+s+\frac{u}{2})}\right]^2 2^{-4s - 2u + 2}\\
	\times  \sum_{m,n\geq 1} \frac{\lambda_g(m)\lambda_g(n)}{(mn)^{s}} \sum_{(c,pq)=1} \frac{S(qm,\bar pn;c)}{c\sqrt p} \pfrac{4\pi \sqrt{mnq}}{c\sqrt p}^{4s + u - 2} \, du.
\end{multline}
The integral and sums are absolutely convergent for $\sigma>\frac 54$, so we can interchange them to obtain
\begin{align}
	\sqrt {q/p} \, \S(p,q;s;\phi) &= \left(\frac{p}{q}\right)^{2s-1} \sum_{m,n\geq 1} \frac{\lambda_g(m)\lambda_g(n)}{(mn)^s} \sum_{(c,pq) = 1} \frac{S(qm, \overline{p}n;c)}{c\sqrt{p}} \Phi\left(\frac{4\pi \sqrt{mnq}}{c\sqrt{p}}\right).
	\end{align}
The sums on the right hand side are exactly $\S(q,p;s;\Phi)$.

Let us finish the proof by specifying the conditions for $\phi$ so that $\Phi$ satisfies  \eqref{eq:phiKuznetsovdGrowth}. 
This is necessary in order for us to apply the Kuznetsov formula to the sum of Kloosterman sums with the weight function $\Phi$. 

So far we have required that $J\geq 4$ and that $-J<\xi<A-J$.
By Stirling's formula the integrand in the definition of $\Phi$ is majorized by $( 1 + |\Im(u)|)^{-J +  2 - 4\sigma - 2\xi}$, so the integral converges as long as 
$2\xi+4\sigma > 3 - J$.
Suppose that $\sigma \geq \frac 12$.
Then $\Phi(0)=0$ as long as we can take $\xi>0$ (note that we can easily stay  away from the poles of the gamma factor since $\kappa\geq 12$).
This requires $A>J$.

On the other hand, to estimate $\Phi^{(\ell)}(x)$ for large $x$ and for $\ell=0,1,2$, we want to move $\xi$ as far to the left as possible. 
Differentiating under the integral, we find that
\begin{align}
	\Phi^{(\ell)} (x) \ll \frac{1}{2\pi i} \int_{(\xi)}|\tilde\phi(u)| \left|\frac{\Gamma(\frac{\kappa+1}{2}-s-\frac{u}{2})}{\Gamma(\frac{\kappa-1}{2}+s+\frac{u}{2})}\right|^2 x^{4\sigma-2+\xi- \ell} ( 1 + | u|)^{\ell} \, du \ll (1 + x)^{4\sigma - 2  + \xi - \ell},
\end{align}
assuming the integral converges.
To ensure convergence, we need that $2\xi+4\sigma>\ell+3-J$, and for $\Phi^{(\ell)} (1+x) \ll x^{-2-\epsilon}$ to hold we need that $4\sigma  + \xi <\ell$.
Suppose that $\frac 12\leq \sigma\leq \frac 54+\epsilon$. 
Then we need
\begin{equation}
	\ell-J+1 < 2\xi < 2\ell-10 \qquad \text{ for }\ell=0,1,2.
\end{equation}
For this interval to be nonempty, it is enough to take $J\geq 12$.
\end{proof}

\bibliographystyle{plain}
\bibliography{../biblio}

\end{document}